\providecommand{\U}[1]{\protect\rule{.1in}{.1in}}
\newtheorem{theorem}{Theorem}[subsection]
\newtheorem{lemma}[theorem]{Lemma}
\newtheorem{proposition}[theorem]{Proposition}
\newtheorem{remark}[theorem]{Remark}
\newenvironment{proof}[1][Proof]{\textbf{#1.} }{\ \rule{0.5em}{0.5em}}
\newdimen\dummy
\begin{document}

\title{\textsc{The translation operator for self-projective coalgebras}}
\author{William Chin\\DePaul University, Chicago Illinois 60614}
\maketitle

\begin{abstract}
We describe the transpose operator for self-projective and symmetric
coalgebras in terms of the syzygy and Nakayama functors.

\end{abstract}

\section{Introduction and Preliminaries}

The Auslander-Reiten (AR) translation plays a fundamental role in the
representation theory of coalgebras and algebras. Given a right comodule $M$,
under certain finiteness conditions, there exists an almost split sequence%
\[
0\rightarrow M\rightarrow E\rightarrow\tau(M)\rightarrow0
\]
where $\tau(M)$ is called the translation of $M$. For many right semiperfect
coalgebras, the almost split sequence exists for all non-projective
finite-dimensional right comodules $M$ [CKQ][CKQ2]. In these case the AR
quiver exists, and the translation induces an important special type of
automorphism of the stable quiver. An introduction to AR theory can be found
the books [ARS][ASS].

A coalgebra is said to be right self-projective if it is projective as a right
comodule over itself. A coalgebra is simply called self-projective if it is
both left and right self-projective. Self-projective coalgebras are also known
as quasi-co-Frobenius (qcF) coalgebras. A coalgebra $C$ is said to be\textit{
right co-Frobenius} if it embeds to\ its $\mathbb{\Bbbk}$-linear dual $DC$ as
a right $DC$-module. Facts and references concerning these types of coalgebras
may be found in [DNR]. Basic information and references concerning the
representation theory of coalgebras and path coalgebras of quivers can be
found in [Ch].

A coalgebra $C$ is said to be \textit{symmetric} [CDN] if it is isomorphic to
its $\mathbb{\Bbbk-}$linear dual $\mathrm{D}C$ as a a $\mathrm{D}%
C$,$\mathrm{D}C$-bimodule. A symmetric coalgebra is necessarily semiperfect,
i.e. it indecomposable injectives are finite-dimensional. It turns out that if
$C$ is a symmetric coalgebra, then its basic coalgebra is also symmetric. In
this article we focus on self-projective and symmetric coalgebras. In these
cases, the translation operator takes a particularly nice form. We show that
the translation given by $\Omega^{-2}\nu=\nu\Omega^{-2}$ for self-projective
coalgebras where $\nu$ is the Nakayama functor and $\Omega^{-2}$ is the second
cosyzygy. In the symmetric case $\nu$ is naturally isomorphic to the identity
functor on finite-dimensional comodules.

We present an example of a pointed self-projective serial coalgebra which is
not symmetric and compute the translation, Nakayama functor, and syzygies. The
coalgebra is a subcoalgebra of a quiver of type $A_{\infty}^{\infty}.$ We then
summarize some results from [Ch2] on representations of coordinate Hopf
algebra $C=\mathbb{\Bbbk}_{\zeta}[SL(2)]$ at a root of unity where
$\mathbb{\Bbbk}$ is a field of characteristic zero. Here we show that $C$ is a
symmetric coalgebra. The results concerning the translation are illustrated in
the almost split sequences and AR quiver.

A theory in a very different direction, but which also enables the computation
of the translation and almost split sequences on the level of Grothendieck
groups of comodules was recently developed in [CS]. There the theory of
Coxeter transformations is developed, with a focus on hereditary coalgebras.
In the case of path coalgebras with locally finite quivers, it is shown that
the image of the translation of a finite-dimensional indecomposable comodule
in the Grothendieck group is given by the Coxeter transformation, which is
determined by a possibly infinite invertible Cartan matrix.

\bigskip

We shall let $C$ denote a coalgebra over the fixed base field $\mathbb{\Bbbk}%
$. A right $C$-comodule $M$ is said to be \textit{quasifinite} if
$\mathrm{Hom}^{C}(F,M)$ is finite-dimensional for all finite-dimensional
comodules $F$. $M$ is said to be \textit{quasifinitely copresented} if it has
an injective copresentation $0\rightarrow N\rightarrow I_{0}\rightarrow I_{1}$
with quasifinite injectives $I_{i}.$ $\mathcal{M}^{C}$, $\mathcal{M}_{f}%
^{^{C}},$ $\mathcal{M}_{q}^{^{C}}$ $\mathcal{M}_{qc}^{^{C}}$ shall denote the
category of right $C$-comodules, finite-dimensional right $C$-comodules,
quasifinite right $C$-comodules and quasifinitely copresented right
$C$-comodules, respectively. The $\Bbbk-$linear dual $\operatorname*{Hom}%
_{\Bbbk}(-,\Bbbk)$\textrm{:}$\mathcal{M}_{f}^{^{C}}\rightarrow\mathcal{M}%
_{f}^{^{C^{op}}}$ is denoted by \textrm{D:}$\mathcal{M}_{f}^{^{C}}%
\rightarrow\mathcal{M}_{f}^{^{C^{op}}}.$ The cohom functor $h_{C}%
(-,-):\mathcal{M}_{q}^{C}\rightarrow\mathcal{M}_{\Bbbk}$ is defined to be
\[
h_{C}(M,N)=\lim_{\rightarrow}\mathrm{DHom}(N_{\lambda},M)
\]
for $M,N\in\mathcal{M}^{C}$ where $N_{\lambda}$ is the direct system of
finite-dimensional comodules of $N$ indexed by $\lambda.$

\subsection{Symmetric and Self-projective coalgebras}

Let $N$ be a quasi-finite right $C$-comodule and let $I(N)$ denote its
injective envelope. Recall [CKQ] that the functor $^{\ast}$: $\mathcal{M}%
_{q}^{C}\rightarrow$ $^{C}\mathcal{M}$ is defined by $h_{C}(-,C).$ The functor
$^{\ast}$ restricts to a duality on quasifinite injectives and is the
coalgebraic version of the functor $\mathrm{Hom}_{R}(-,R)$ for $R$-modules.
The transpose operator Tr is defined in [CKQ] via the minimal injective
copresentation
\[
0\rightarrow\mathrm{Tr}N\rightarrow I_{1}^{\ast}\rightarrow I_{0}^{\ast}%
\]
where $0\rightarrow N\rightarrow I_{0}\rightarrow I_{1}$ is a minimal
injective copresentation of $N.$ In general, the translation operator is
defined to be DTr$(N)$ on comodules $N$ such that Tr$N$ is finite-dimensional.
Here we are concerned with semiperfect coalgebras and will assume comodules
are finite-dimensional so that the translation is always defined. The cosyzygy
operator $\Omega^{-1}$ is defined as the cokernel in the exact sequence
\[
0\rightarrow N\rightarrow I(N)\rightarrow\Omega^{-1}(N)\rightarrow0.
\]
If $C$ is right semiperfect, the syzygy operator $\Omega$ is defined by the
exact sequence
\[
0\rightarrow\Omega(N)\rightarrow P(N)\rightarrow N\rightarrow0
\]
where $P(N)$\ denotes the projective cover of $N$. For semiperfect coalgebras
we note that these functors are inverse equivalences between the categories of
finite-dimensional comodules modulo injectives and projectives respectively:
\[
\overline{\mathcal{M}}_{f}^{C}\overset{\Omega}{\underset{\Omega^{-1}%
}{\leftrightarrows}}\underline{\mathcal{M}}_{f}^{C}.
\]

We collect some facts about comodule functors. The reader is referred to [CDN]
for a development of the theory of symmetric coalgebras, and the book [DNR]
for semiperfect and self-projective (i.e. quasi-co-Frobenius) coalgebras. Here
we just point out that symmetric$\Rightarrow$self-projective$\Rightarrow
$semiperfect (one-sided and two-sided versions). In addition, it is recently
be shown that one-sided self-projective implies semiperfect [Iov].

\begin{lemma}
Let $C=\oplus_{i}P_{i}$ be a right self-projective coalgebra with
projective-injective indecomposable right coideals $P_{i}$. Then for a
finite-dimensional comodule $N,$ \newline(a) $N^{\ast}=\oplus_{i}%
\mathrm{D\operatorname*{Hom}}(P_{i},N)$\newline(b) $^{\ast}$: $\mathcal{M}%
_{q}^{C}\rightarrow$ $^{C}\mathcal{M}$ is an exact functor. \newline(c) $\nu:$
$\mathcal{M}_{f}^{C}\rightarrow$ $^{C}\mathcal{M}_{f}$ is an exact functor.
\end{lemma}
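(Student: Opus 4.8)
The plan is to establish (a) by direct computation and then derive (b) and (c) from it. For (a) I would begin from the definition $N^{\ast}=h_{C}(N,C)=\lim_{\rightarrow}\mathrm{D}\,\mathrm{Hom}^{C}(C_{\lambda},N)$, the colimit running over the finite-dimensional subcomodules $C_{\lambda}$ of $C$. Because $C$ is right self-projective it is semiperfect, so each indecomposable injective-projective coideal $P_{i}$ is finite-dimensional; consequently every finite-dimensional subcomodule of $C=\bigoplus_{i}P_{i}$ is contained in a finite partial sum $\bigoplus_{i\in F}P_{i}$, and these partial sums form a cofinal subsystem along which the colimit may be computed. For each finite $F$ one has $\mathrm{Hom}^{C}(\bigoplus_{i\in F}P_{i},N)=\bigoplus_{i\in F}\mathrm{Hom}^{C}(P_{i},N)$, a finite-dimensional space; applying the exact duality $\mathrm{D}$, which commutes with finite direct sums, and passing to the colimit along the resulting inclusions yields $N^{\ast}=\bigoplus_{i}\mathrm{D}\,\mathrm{Hom}^{C}(P_{i},N)$. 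I would emphasize that the infinite direct sum arises only from the colimit and not from dualizing an infinite sum, so no product-versus-sum issue occurs.

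For (b) the decisive point is that right self-projectivity makes each summand $P_{i}$ a projective right comodule, whence $\mathrm{Hom}^{C}(P_{i},-)$ is exact on $\mathcal{M}^{C}$. Since $N$ is quasifinite, $\mathrm{Hom}^{C}(P_{i},N)$ is finite-dimensional, so the colimit computation of (a) is valid verbatim and identifies $^{\ast}$ with the functor $N\mapsto\bigoplus_{i}\mathrm{D}\,\mathrm{Hom}^{C}(P_{i},N)$. Applying this description to a short exact sequence $0\rightarrow N'\rightarrow N\rightarrow N''\rightarrow0$, the exactness of each $\mathrm{Hom}^{C}(P_{i},-)$ produces short exact sequences of finite-dimensional spaces, the exact contravariant functor $\mathrm{D}$ reverses them, and the exact operation $\bigoplus_{i}$ reassembles them; hence $^{\ast}$ is exact.

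For (c) I would reduce the exactness of $\nu$ to (b): as $\nu$ is obtained from the functor $^{\ast}$ together with the exact vector-space duality $\mathrm{D}$, its exactness on $\mathcal{M}_{f}^{C}$ is immediate. The additional content is that $\nu$ preserves finite dimension, which follows from the formula in (a). For finite-dimensional $N$ only finitely many simple comodules occur as composition factors, and since $P_{i}$ is the projective cover of the simple $S_{i}$ one has $\mathrm{Hom}^{C}(P_{i},N)\neq0$ only when $S_{i}$ is a composition factor of $N$. Thus $\bigoplus_{i}\mathrm{D}\,\mathrm{Hom}^{C}(P_{i},N)$ is a finite sum of finite-dimensional spaces, so $\nu(N)$ is finite-dimensional and $\nu$ indeed lands in $^{C}\mathcal{M}_{f}$.

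The step I expect to require the most care is the colimit computation in (a): verifying that the finite partial sums $\bigoplus_{i\in F}P_{i}$ are genuinely cofinal among the finite-dimensional subcomodules of $C$, and that dualization and the colimit interact correctly with the direct-sum decomposition of $C$ as a bicomodule. Once this interchange is pinned down, parts (b) and (c) follow formally from the projectivity of the $P_{i}$ and the exactness of $\mathrm{D}$.
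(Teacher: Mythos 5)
Your proposal is correct and follows essentially the same route as the paper: part (a) from the definition of the cohom functor via the cofinal system of finite partial sums of the finite-dimensional summands $P_{i}$, and parts (b) and (c) from the projectivity of the $P_{i}$ together with the exactness of $\mathrm{D}$. Your extra verification that $\nu$ preserves finite-dimensionality (only finitely many $\operatorname{Hom}(P_{i},N)$ are nonzero since $N$ has finite length) is a detail the paper defers to the proof of its Proposition, and it is a worthwhile addition.
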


\begin{proof}
The first statement follows directly from the definition of the cohom functor
since $C$ is plainly the direct limit of the finite-dimensional projective
summands $P_{i}$. The remaining assertions follows directly, since the $P_{i}$
are projective and the $\mathbb{\Bbbk-}$linear dual \textrm{D} is exact.
\end{proof}

\begin{proposition}
(a) For a left self-projective coalgebra $C$, the translation is naturally
isomorphic to $\Omega^{-2}\nu$. \newline(b) For a right self-projective
coalgebra $C$, the translation is naturally isomorphic to $\nu\Omega^{-2}$.
\newline(c) For a symmetric coalgebra $C$, the translate is naturally
isomorphic to $\Omega^{-2}$.
\end{proposition}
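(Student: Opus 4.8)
The plan is to reduce all three statements to the single identity $\mathrm{Tr}\,N \cong (\Omega^{-2}N)^{\ast}$ and then strip off the duality $\mathrm{D}$. I would begin with a minimal injective copresentation $0\to N\to I_{0}\to I_{1}$ and splice in the second cosyzygy: since the copresentation is minimal, $I_{1}=I(\Omega^{-1}N)$ and the connecting map factors as $I_{0}\twoheadrightarrow\Omega^{-1}N\hookrightarrow I_{1}$, so that
\[
0\to N\to I_{0}\to I_{1}\to\Omega^{-2}N\to 0
\]
is exact. For part (b) I would then apply $^{\ast}$, which by the preceding Lemma (part (b)) is exact -- this is exactly where right self-projectivity is used -- and which is contravariant since it is a duality on injectives. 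The four-term sequence becomes
\[
0\to(\Omega^{-2}N)^{\ast}\to I_{1}^{\ast}\to I_{0}^{\ast}\to N^{\ast}\to 0 ,
\]
and the map $I_{1}^{\ast}\to I_{0}^{\ast}$ is by construction the one appearing in the definition of $\mathrm{Tr}\,N$. Hence $\mathrm{Tr}\,N=\ker(I_{1}^{\ast}\to I_{0}^{\ast})\cong(\Omega^{-2}N)^{\ast}$, and applying $\mathrm{D}$ together with $\nu=\mathrm{D}\circ{}^{\ast}$ gives $\tau N=\mathrm{D}\,\mathrm{Tr}\,N\cong\mathrm{D}\bigl((\Omega^{-2}N)^{\ast}\bigr)=\nu(\Omega^{-2}N)$, i.e. $\tau\cong\nu\Omega^{-2}$. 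Every step is functorial, so the isomorphism is natural on the stable category $\underline{\mathcal{M}}_{f}^{C}$.

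For part (a) I would run the mirror-image argument on the opposite side. Using the identity $\tau^{-1}=\mathrm{Tr}\,\mathrm{D}$, the inverse translation is computed from a minimal injective copresentation of the left comodule $\mathrm{D}N$ via the left-hand cohom functor, which is exact precisely when $C$ is left self-projective; the same splice-and-dualize computation then yields $\tau^{-1}\cong\nu^{-1}\Omega^{2}$, and inverting the composite flips the order to give $\tau\cong\Omega^{-2}\nu$. When $C$ is self-projective on both sides the two expressions agree, because $\nu$ is then an exact self-equivalence preserving injectives and hence commutes with $\Omega^{-1}$ up to natural isomorphism: apply $\nu$ to the cosyzygy sequence and use that $\nu I(N)$ is again injective. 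Part (c) is then immediate from (a) or (b): for a symmetric coalgebra the bicomodule isomorphism $C\cong\mathrm{D}C$ forces $\nu\cong\mathrm{id}$ on finite-dimensional comodules (the fact recorded in the introduction, following [CDN]), whence $\tau\cong\Omega^{-2}$.

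The step I expect to be the main obstacle is making $\mathrm{Tr}\,N\cong(\Omega^{-2}N)^{\ast}$ rigorous and natural rather than merely a stable coincidence: one must verify that the minimal copresentation really splices to the displayed four-term exact sequence (so that the cokernel is the genuine second cosyzygy, with no spurious injective summands), that the contravariant exact functor $^{\ast}$ identifies $\ker(I_{1}^{\ast}\to I_{0}^{\ast})$ with $(\Omega^{-2}N)^{\ast}$ on the nose, and that the resulting isomorphism is compatible with the stable equivalences $\Omega$ and $\Omega^{-1}$. The secondary difficulty is purely organizational: keeping the left- and right-hand versions of $^{\ast}$, $\mathrm{D}$, and $\nu$ straight, since it is the contravariance of these dualities that produces $\nu\Omega^{-2}$ in (b) but $\Omega^{-2}\nu$ in (a).
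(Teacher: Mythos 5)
Your part (b) is exactly the paper's argument: splice the minimal copresentation into the four-term sequence $0\to N\to I_{0}\to I_{1}\to\Omega^{-2}N\to0$, apply the functor $^{\ast}$, which is exact by the Lemma under right self-projectivity, identify $\mathrm{Tr}\,N=\ker(I_{1}^{\ast}\to I_{0}^{\ast})\cong(\Omega^{-2}N)^{\ast}$, and apply $\mathrm{D}$. For part (a), however, you take a genuinely different route, and it has a gap. The paper never inverts anything: it applies the (always exact) functor $\mathrm{D}$ to the four-term transpose sequence $0\to\mathrm{Tr}\,N\to I_{1}^{\ast}\to I_{0}^{\ast}\to N^{\ast}\to0$ (which is exact by general properties of cohom, with no self-projectivity hypothesis), obtaining $0\to\nu N\to\nu I_{0}\to\nu I_{1}\to\mathrm{DTr}\,N\to0$; left self-projectivity enters only to guarantee that the injective left comodules $I_{i}^{\ast}$ are projective, so that their $\mathrm{D}$-duals are injective right comodules and the displayed sequence is a minimal injective resolution of $\nu N$, giving $\mathrm{DTr}\,N\cong\Omega^{-2}\nu N$ directly. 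Your detour through $\tau^{-1}=\mathrm{Tr}\,\mathrm{D}$ followed by formal inversion requires two facts you never establish: that $\mathrm{DTr}$ and $\mathrm{TrD}$ are quasi-inverse equivalences of the relevant stable comodule categories, and, more seriously, that $\nu=\mathrm{D}\circ{}^{\ast}$ is an equivalence with quasi-inverse ${}^{\ast}\circ\mathrm{D}$ under \emph{only} left self-projectivity. Self-projectivity is a genuinely one-sided condition for coalgebras, and under the hypothesis of (a) alone the right-comodule functor $^{\ast}$ need not even be exact (the Lemma's exactness statement uses right self-projectivity), so the invertibility of $\nu$ that your inversion step leans on is precisely the kind of assertion that can fail; your closing remark that $\nu$ commutes with $\Omega^{-1}$ ``when $C$ is self-projective on both sides'' quietly concedes that your argument for (a) uses more than the stated one-sided hypothesis.

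Part (c) as you have written it is circular. The statement that $\nu$ is naturally isomorphic to the identity for symmetric coalgebras, which you quote from the introduction, is the paper announcing this very proposition, not a citable prior result, and [CDN] does not prove it in this cohom formulation. Establishing $\nu\cong\mathrm{id}$ is the actual content of the paper's proof of (c): one invokes [CDN, Theorem 5.3] to get a natural isomorphism of right $R$-modules $\operatorname*{Hom}_{\Bbbk}(N,\Bbbk)\cong\operatorname*{Hom}_{R}(N,R)$ where $R=\mathrm{D}C$, then uses the Lemma's decomposition $N^{\ast}=\oplus_{i}\mathrm{D}\operatorname*{Hom}(P_{i},N)$ and the finite length of $N$ to reduce to a finite-dimensional subcoalgebra, and finally chains
\[
N^{\ast}=\mathrm{D}\operatorname*{Hom}\nolimits_{R}(C,N)\cong\mathrm{D}\operatorname*{Hom}\nolimits_{R}(\mathrm{D}N,R)\cong\mathrm{D}\operatorname*{Hom}\nolimits_{\Bbbk}(\mathrm{D}N,\Bbbk)=\mathrm{D}^{3}(N)\cong\mathrm{D}N,
\]
all naturally in $N$, so that $\nu=\mathrm{D}\circ{}^{\ast}\cong\mathrm{D}^{2}\cong\mathrm{id}$ on $\mathcal{M}_{f}^{C}$. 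None of this reduction or the use of the bimodule embedding $C\hookrightarrow\mathrm{D}C$ (via $\mathrm{Rat}({}_{R}R)$) appears in your proposal; the easy deduction $\nu\cong\mathrm{id}\Rightarrow\tau\cong\Omega^{-2}$ is the trivial part, and the missing isomorphism $N^{\ast}\cong\mathrm{D}N$ is the genuine gap you would need to fill.
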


\begin{proof}
Let $N$ be an indecomposable non-injective finite-dimensional right
$C$-comodule. Then the transpose is defined by $0\rightarrow$Tr$N\rightarrow
I_{1}^{\ast}\rightarrow I_{0}^{\ast}$ where $0\rightarrow N\rightarrow
I_{0}\rightarrow I_{1}$ is a minimal injective copresentation of $N.$ We
obtain using exactness of D,
\[
0\rightarrow\mathrm{D}N^{\ast}\rightarrow\mathrm{D}I_{0}^{\ast}\rightarrow
\mathrm{D}I_{1}^{\ast}\rightarrow\mathrm{DTr}N\rightarrow0,
\]
i.e. $0\rightarrow\nu N\rightarrow\nu I_{0}\rightarrow\nu I_{1}\rightarrow
\mathrm{DTr}N\rightarrow0$. Since the $I_{i}^{\ast}$ are injective and hence
projective left comodules, the $\nu I_{i}^{\ast}$ are injective, and we have a
minimal injective resolution of $\nu N.$ Thus $\Omega^{-2}\nu(N)=\mathrm{DTr}%
N.$ This proves (a).

To prove (b) we have by definition of the cosyzygy,%
\[
0\rightarrow N\rightarrow I_{0}\rightarrow I_{1}\rightarrow\Omega
^{-2}(N)\rightarrow0
\]
and by exactness of $^{\ast}$,
\[
0\rightarrow(\Omega^{-2}N)^{\ast}\rightarrow I_{1}^{\ast}\rightarrow
I_{0}^{\ast}\rightarrow N^{\ast}\rightarrow0
\]
whence $(\Omega^{-2}N)^{\ast}\cong\mathrm{Tr}N;$ applying D yields the result.

Assume next that $C$ is symmetric. Let $R$ denote the dual algebra D$C.$ By
definition, $C\ $embeds in $R$ as a $R,R$-bimodule and by [CDN, Theorem 5.3],
we have
\[
\operatorname*{Hom}\nolimits_{\mathbb{\Bbbk}}(N,\mathbb{\Bbbk})\cong%
\operatorname*{Hom}\nolimits_{R}(N,R),
\]
an isomorphism of right $R$-modules, which is natural in $N$. This can be seen
by noting that $C$ embeds in $R$ as a left $R$-module as Rat$(_{R}R),$ the
largest right rational left $R$-submodule of $R$. Then $\operatorname*{Hom}%
\nolimits_{\mathbb{\Bbbk}}(N,\mathbb{\Bbbk})\cong\operatorname*{Hom}%
\nolimits_{R}(N,C)\cong\operatorname*{Hom}\nolimits_{R}(N,Rat(R))\cong%
\operatorname*{Hom}\nolimits_{R}(N,R)$ (the first isomorphism sends $f$ to
$(id_{N}\otimes f)\rho_{N}$).

We need to show that the composition \textrm{D}$(-)^{\ast}$ is naturally
isomorphic to the identity functor. By the Lemma, $N^{\ast}=\oplus
\mathrm{D}\operatorname*{Hom}(P_{i},N)$ where the $P_{i}$ are projective
indecomposable right subcomodules of $C$. Since $N$ is of finite length,
$\operatorname*{Hom}(P_{i},N)$ is nonzero for only finitely many $i$. It
follows (by taking coefficient space of $\oplus P_{i}$) that $\mathrm{D}%
\operatorname*{Hom}_{\mathrm{D}C}(C,N)\cong\mathrm{D}\operatorname*{Hom}%
_{\mathrm{D}C}(F,N)$ for some finite-dimensional subcoalgebra $F\subset C$.
Thus may assume $C$ is finite dimensional and we have isomorphisms
\begin{align*}
N^{\ast}  &  =\mathrm{D}\operatorname*{Hom}\nolimits_{R}(C,N)\\
&  \cong\mathrm{D}\operatorname*{Hom}\nolimits_{R}(\mathrm{D}N,R)\\
&  \cong\mathrm{D}\operatorname*{Hom}\nolimits_{\Bbbk}(\mathrm{D}N,\Bbbk)\\
&  =\mathrm{D}^{3}(N)\\
&  \cong\mathrm{D}N,
\end{align*}
which are natural in $N.$ Therefore $\nu$ is isomorphic to the identity
functor on $\mathcal{M}_{f}^{C}.$ This completes the proof of (c).
\end{proof}

\bigskip

Thus, by results of [CKQ], if $N$ is an indecomposable non-injective
finite-dimensional comodule over a symmetric coalgebra, then the almost split
sequence starting at $N$ ends at $\Omega^{-2}(N)$. Dually, if $M$ is an
indecomposable non-injective finite-dimensional comodule over a symmetric
coalgebra, then the almost split sequence ending at $M$ starts at $\Omega
^{2}(N).$

\section{Examples}

\subsection{$\mathbb{A}_{\infty}^{\infty}$}

Let $Q$ be the quiver of type $\mathbb{A}_{\infty}^{\infty}$ with vertices
indexed by $\mathbb{Z}$ and arrows $a_{i}:i\rightarrow i+1$ for all
$i\in\mathbb{Z}$. Fix $n>0$ and consider the subcoalgebra $C$ of the path
coalgebra $\mathbb{\Bbbk}Q$ spanned by paths of length at most $n$. Thus $C$
is the degree $n$ term of the coradical filtration of $\mathbb{\Bbbk}Q$.

The coalgebra $C$ is a serial coalgebra [CG2] and each of its
finite-dimensional indecomposable representations is isomorphic to the right
coideal generated by a path of some fixed length $\ell\leq n.$ In detail, fix
integers $i\leq j$ and let $V_{i,j}$ be the right coideal spanned by subpaths
$a_{j}\cdots a_{t}$, $i\leq t\leq j,$ all of which end at $j$ (paths are
written from right to left). Then $V_{i,j}$ is the right coideal generated by
the path $a_{j}\cdots a_{i}$ of length $j-i\leq n$ where $V_{i,i}$ is regarded
as the simple comodule at the vertex $i.$ As a representation of $Q,$
$V_{i,j}$ has a one-dimensional vector space at the vertices between $i$ and
$j$ with identity maps assigned to the arrows. In particular $S(i)=V_{i,i}$ is
the simple right comodule at the vertex $i.$ Dually, let $U_{i,j}$ be the left
coideal spanned by the subpaths $a_{t}\cdots a_{i}$ of $a_{j}\cdots a_{i}.$
The $U_{i,j}$ are the indecomposable left comodules, which correspond to
indecomposable representations of $Q^{op},$ and $U_{i,i}$ is the simple left
comodule at the vertex $i.$ Clearly, $U_{i,j}=\mathrm{D}V_{i,j}.$ It is easy
to see that $I_{i}=V_{i-n,i}$ is both the injective envelope of $S(i)$ and the
projective cover of $S(i-n).$ Evidently, $C$ is a self-projective coalgebra.
But $C$ is not a symmetric coalgebra because the Nakayama functor is not
isomorphic to the identity as we show presently.

\begin{proposition}
(a) $\Omega^{-2}(V_{i,j})=V_{i-n-1,j-n-1}$\newline(b) $\nu(V_{i,j}%
)=V_{i+n,j+n}$\newline(c) \textrm{DTr}$(V_{i,j})=V_{i-1,j-1}$\newline(d)
$\Omega^{-2}(U_{i,j})=V_{i+n+1,j+n+1}$\newline(e) $\nu(U_{i,j})=V_{i-n,j-n}%
$\newline(f) \textrm{DTr}$(U_{i,j})=V_{i+1,j+1}$
\end{proposition}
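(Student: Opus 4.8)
The plan is to reduce everything to the socle/top data of these uniserial comodules, compute $\Omega^{-2}$ and $\nu$ by hand, and then read off the transpose from the Proposition of the previous section; parts (c) and (f) will cost essentially nothing once (a),(b) and their left-handed analogues are in place.

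First I would record the structural data that drives the whole computation. Since $C$ is serial, each $V_{i,j}$ is uniserial, so its subcomodules and quotients form a single chain and are again of the form $V$. From $I_{i}=V_{i-n,i}$ being simultaneously $I(S(i))$ and $P(S(i-n))$ I read off that $V_{i,j}$ has socle $S(j)$ and top $S(i)$, with composition factors $S(i),S(i+1),\dots ,S(j)$ each occurring once; dually $U_{i,j}=\mathrm{D}V_{i,j}$ has socle $S(i)$ and top $S(j)$, and the indecomposable injective--projective left comodules are $J_{\ell}=U_{\ell,\ell+n}$. For (a) the injective envelope of $V_{i,j}$ is forced by its socle to be $I(V_{i,j})=I_{j}=V_{j-n,j}$, and the chain structure gives the inclusion $V_{i,j}\hookrightarrow V_{j-n,j}$ with cokernel $V_{j-n,i-1}$; hence $\Omega^{-1}(V_{i,j})=V_{j-n,i-1}$. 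Iterating the same recipe yields $\Omega^{-2}(V_{i,j})=\Omega^{-1}(V_{j-n,i-1})=V_{i-n-1,j-n-1}$, with the dimension identity $\dim V_{a,b}=b-a+1$ serving as a running check.

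For (b) I would use that $\nu=\mathrm{D}(-)^{\ast}$ is exact by the Lemma and is a composite of two dualities, hence a covariant autoequivalence of $\mathcal{M}_{f}^{C}$; it therefore sends simples to simples and preserves length and indecomposability. The one non-formal input is the value of $\nu$ on a single object, which pins the shift, and \emph{this is the main obstacle}. I would compute it on a simple: by Lemma~(a), $S(k)^{\ast}=\oplus_{i}\mathrm{D}\operatorname*{Hom}(I_{i},S(k))$, and $\operatorname*{Hom}(I_{i},S(k))\neq 0$ exactly when $S(k)$ is the top of $I_{i}$, i.e.\ $i=k+n$; thus $S(k)^{\ast}$ is one-dimensional and $\nu(S(k))=S(k+n)$. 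The delicate point is identifying the vertex at which the one-dimensional left comodule $S(k)^{\ast}$ sits, which forces care with the cohom pairing and with the passage right $\to$ left $\to$ right comodule; everything else is bookkeeping. Granting $\nu(S(t))=S(t+n)$, exactness together with seriality upgrades this to the whole comodule: $\nu$ carries the uniserial filtration of $V_{i,j}$ with factors $S(i),\dots ,S(j)$ to an indecomposable with factors $S(i+n),\dots ,S(j+n)$, which by seriality must be $V_{i+n,j+n}$, proving (b).

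Part (c) is then immediate: since $C$ is self-projective on both sides, $\mathrm{DTr}\cong\Omega^{-2}\nu\cong\nu\Omega^{-2}$, so combining (a) and (b) gives $\mathrm{DTr}(V_{i,j})=\Omega^{-2}(V_{i+n,j+n})=V_{i-1,j-1}$. Finally I would obtain (d)--(f) as the left-handed mirror of (a)--(c): the same three computations carried out for the uniserial left comodules $U_{i,j}$, whose socle and top are interchanged relative to $V_{i,j}$, produce $\Omega^{-2}U_{i,j}=U_{i+n+1,j+n+1}$, $\nu U_{i,j}=U_{i-n,j-n}$ and $\mathrm{DTr}\,U_{i,j}=U_{i+1,j+1}$ (the Nakayama shift reversing sign precisely because socle and top are swapped), and the duality $\mathrm{D}$, via $\mathrm{D}U_{a,b}=V_{a,b}$, records these as the stated right comodules. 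I would check at this last step that the handedness conventions of the section match, but no new computation is needed beyond the socle/top bookkeeping already set up.
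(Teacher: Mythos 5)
Your proposal is correct, and while part (a) coincides with the paper's computation (the cosyzygy recipe $\Omega^{-1}(V_{i,j})=V_{j-n,i-1}$, iterated), your route to (b) and (c) is genuinely different. The paper never evaluates $\nu$ on simples: it applies the exact functor $(-)^{\ast}$ to the whole four-term copresentation $0\rightarrow V_{i,j}\rightarrow V_{j-n,j}\rightarrow V_{i-1-n,i-1}\rightarrow\Omega^{-2}(V_{i,j})\rightarrow 0$, identifies $V_{a-n,a}^{\ast}=U_{a,a+n}$, and reads off $\operatorname{Tr}V_{i,j}=U_{i-1,j-1}$ and $V_{i,j}^{\ast}=U_{i+n,j+n}$ from that single sequence, so (b) and (c) drop out simultaneously and independently of the earlier Proposition; you instead pin $\nu$ on simples and then obtain (c) from $\mathrm{DTr}\cong\Omega^{-2}\nu$, which is a clean functorial shortcut but shifts the burden onto two facts you assert rather than prove: that $\nu$ is a covariant \emph{equivalence} of $\mathcal{M}_{f}^{C}$ (you need this for the indecomposability of $\nu V_{i,j}$ --- exactness alone does not prevent an exact functor from splitting extensions), and the vertex identification $\nu(S(k))=S(k+n)$, which you explicitly ``grant.'' Both are true and closable within your own setup: reflexivity of finite-dimensional comodules over a two-sided self-projective coalgebra makes $(-)^{\ast}$ a duality, hence $\nu$ an equivalence; and applying the exact contravariant $(-)^{\ast}$ to $P(S(k))\twoheadrightarrow S(k)$ embeds $S(k)^{\ast}$ into $P(S(k))^{\ast}=I_{k+n}^{\ast}\cong U_{k+n,k+2n}$, whose socle sits at vertex $k+n$. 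Note that this last step rests on the same identification $I_{a}^{\ast}=U_{a,a+n}$ (the socle-matching duality of $(-)^{\ast}$ on quasifinite injectives from [CKQ]) that the paper itself invokes only ``by inspection,'' so once this is said your argument is at parity with the paper's level of rigor. What your approach buys is economy --- one dimension count on simples plus seriality replaces the explicit dualized exact sequence --- at the cost of leaning on the general theory ($\nu$ an equivalence, Proposition 1.1.2); the paper's version is more self-contained and even serves as an independent check of $\mathrm{DTr}\cong\nu\Omega^{-2}$ in this example. Finally, your reading of (d)--(f) is the right one: the computations produce left comodules $U_{i+n+1,j+n+1}$, $U_{i-n,j-n}$, $U_{i+1,j+1}$, and the $V$'s in the statement record their images under $\mathrm{D}$ (a handedness slip the paper glosses over with ``the dual assertions (d--f) are similar'').
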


\begin{proof}
We have the injective copresentation
\[
0\rightarrow V_{i,j}\rightarrow V_{j-n,j}\rightarrow V_{i-1-n,i-1}%
\rightarrow\Omega^{-2}(V_{i,j})\rightarrow0
\]

which yields $\Omega^{-2}(V_{i,j})=V_{i-n-1,j-n-1}$ by inspection. This proves
(a). Dualizing, we have
\[
0\rightarrow\operatorname*{Tr}V_{i,j}\rightarrow V_{i-1-n,i-1}^{\ast
}\rightarrow V_{j-n,j}^{\ast}\rightarrow V_{i,j}^{\ast}\rightarrow0
\]
which we rewrite as%

\[
0\rightarrow\operatorname*{Tr}V_{i,j}\rightarrow U_{i-1,i-1+n}\rightarrow
U_{j,j+n}\rightarrow V_{i,j}^{\ast}\rightarrow0
\]
By inspection this yields $\operatorname*{Tr}V_{i,j}=U_{i-1,j-1}$ and
$V_{i,j}^{\ast}=U_{i+n,j+n}.$ Assertions (b) and (c) follow immediately. The
dual assertions (d-f) are similar.
\end{proof}

The almost split sequences are just as for Nakayama algebras [ARS] as follows:%
\[
0\rightarrow V_{i,j}\rightarrow V_{i-1,j}\oplus V_{i,j-1}\rightarrow
V_{i-1,i-1}\rightarrow0
\]
if $n>i-j>0.$ The boundary sequences are%
\[
0\rightarrow S(i)\rightarrow V_{i-1,i}\rightarrow S(i-1)\rightarrow0
\]
and
\[
0\rightarrow V_{i-n+1,i-1}\rightarrow I_{i}\oplus V_{i-n+1,i-1}\rightarrow
V_{i-n,i-1}\rightarrow0.
\]
The AR quiver for $n=4$ is shown below. The stable AR quiver is of type
$\mathbb{ZA}_{n}.$ The translation is denoted by $\dashrightarrow.$
\[
\cdot\cdot\cdot%
\begin{array}
[c]{ccccccccccccccc}
& S(1) &  & \dashrightarrow &  & S(0) &  & \dashrightarrow &  & S(-1) &  &
\dashrightarrow &  & S(-2) & \\
\nearrow &  & \searrow &  & \nearrow &  & \searrow &  & \nearrow &  & \searrow
&  & \nearrow &  & \searrow\\
& \dashrightarrow &  & V_{0,1} &  & \dashrightarrow &  & V_{-1,0} &  &
\dashrightarrow &  & V_{-2,-1} &  & \dashrightarrow & \\
\searrow &  & \nearrow &  & \searrow &  & \nearrow &  & \searrow &  & \nearrow
&  & \searrow &  & \nearrow\\
& V_{0,2} &  & \dashrightarrow &  & V_{-1,1} &  & \dashrightarrow &  &
V_{-2,0} &  & \dashrightarrow &  & V_{-3,-1} & \\
\nearrow &  & \searrow &  & \nearrow &  & \searrow &  & \nearrow &  & \searrow
&  & \nearrow &  & \searrow\\
& \dashrightarrow &  & V_{-1,2} &  & \dashrightarrow &  & V_{-2,1} &  &
\dashrightarrow &  & V_{-3,0} &  & \dashrightarrow & \\
\searrow &  & \nearrow &  & \searrow &  & \nearrow &  & \searrow &  & \nearrow
&  & \searrow &  & \nearrow\\
& V_{-1,3} & \rightarrow & I_{3} & \rightarrow & V_{-2,2} & \rightarrow &
I_{2} & \rightarrow & V_{-3,1} & \rightarrow & I_{1} & \rightarrow & V_{-4,0}
&
\end{array}
\cdot\cdot\cdot
\]

\subsection{Quantum SL(2) at a root of 1}

Assume the base field $\Bbbk$ is of characteristic zero. Henceforth, let
$C=\Bbbk_{\zeta}[SL(2)]$ be the $q$-analog of the coordinate Hopf algebra of
$SL(2)$, where $q$ is specialized to a root of unity $\zeta$ of odd order
$\ell$.

The coalgebra $C$ has the following presentation. The algebra generators are
$a,b,c,d,$ with relations
\begin{align*}
ba  &  =\zeta ab\\
db  &  =\zeta db\\
ca  &  =\zeta ac\\
bc  &  =cb\\
ad-da  &  =(\zeta-\zeta^{-1})bc\\
ad-\zeta^{-1}bc  &  =1
\end{align*}
and with Hopf algebra structure further specified by%
\begin{align*}
\Delta%
\begin{pmatrix}
a & b\\
c & d
\end{pmatrix}
&  =%
\begin{pmatrix}
a & b\\
c & d
\end{pmatrix}
\otimes%
\begin{pmatrix}
a & b\\
c & d
\end{pmatrix}
\\
\varepsilon%
\begin{pmatrix}
a & b\\
c & d
\end{pmatrix}
&  =%
\begin{pmatrix}
1 & 0\\
0 & 1
\end{pmatrix}
,\text{ }S%
\begin{pmatrix}
a & b\\
c & d
\end{pmatrix}
=%
\begin{pmatrix}
d & -\zeta b\\
-\zeta^{-1}c & a
\end{pmatrix}
\end{align*}

By general coalgebra theory (see [Ch]), $C$ is the direct sum of blocks, each
determined by an equivalence class (under linkage) of simple comodules. For
quantum $SL(2)$, each simple is labelled by a nonnegative integer, called a
\textit{highest weight}. In [Ch2] we see that there are precisely $\ell-1$
nontrivial blocks $C_{0},...,C_{\ell-2}$ where $C_{r}$ contains the simple
comodule of highest weight $r,$ $0\leq r<\ell-2.$ The other simples in $C_{r}$
are obtained via $\ell$-reflections. There are an infinite set of trivial
blocks, whose simple modules are projective and injective and whose quivers
therefore consist of a single vertex. Thus in some sense the representation
theory of $C$ is reduced to the nontrivial blocks, which turn out to be
Morita-Takeuchi equivalent to each other. A nontrivial block of the basic
coalgebra is described as a subcoalgebra of the path coalgebra of its quiver
in the next result.

\begin{theorem}
[{[Ch2]}]The basic coalgebra $B$ of $C_{r}$ is the subcoalgebra of path
coalgebra of the quiver
\[
0\overset{b_{0}}{\underset{a_{0}}{\leftrightarrows}}1\overset{b_{1}}%
{\underset{a_{1}}{\leftrightarrows}}2\overset{b_{2}}{\underset{a_{2}%
}{\leftrightarrows}}\cdot\cdot\cdot
\]
spanned by the by group-likes $g_{i}$ corresponding to vertices and arrows
$a_{i},b_{i}$ together with coradical degree two elements
\begin{align*}
d_{0}  &  :=b_{0}a_{0}\\
d_{i+1}  &  :=a_{i}b_{i}+b_{i+1}a_{i+1},\text{ }i\geq0.
\end{align*}

\end{theorem}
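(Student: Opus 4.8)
The plan is to invoke the structure theory of pointed coalgebras (see [Ch]), which realizes a basic coalgebra as a subcoalgebra of the path coalgebra of its Gabriel quiver, with the $n$-th term of the coradical filtration landing in the span of paths of length at most $n$. Since every simple comodule of the block is one-dimensional after passing to the basic coalgebra $B$, the coalgebra $B$ is pointed, and its coradical $B_0$ is spanned by the grouplikes $g_i$ attached to the simples. The first task is therefore to enumerate the simple comodules of $C_r$ and index them as the vertices $0,1,2,\dots$. This is the linkage computation: starting from the highest weight $r$, the $\ell$-reflections produce the linked dominant weights $r,\,2\ell-2-r,\,2\ell+r,\,4\ell-2-r,\dots$, which are in bijection with $\mathbb{N}$ and furnish the vertex set.

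Next I would determine the arrows, that is the degree-one layer $B_1/B_0$, by computing $\mathrm{Ext}^1$ between the simples. Using the known submodule structure of the Weyl (induced) comodules for $\Bbbk_{\zeta}[SL(2)]$ at a root of unity, each such comodule for a non-minimal linked weight being a uniserial length-two object, one shows that $\mathrm{Ext}^1$ between consecutive simples is one-dimensional in each direction and vanishes otherwise. This yields exactly the arrows $a_i$ and $b_i$ joining consecutive vertices in both directions and identifies them, up to scalar, inside $B_1$.

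The decisive and most delicate step is the identification of the coradical degree-two elements. Here I would compute the comultiplication on $B_2$ modulo $B_1$ and determine precisely which linear combinations of length-two paths lie in the image of $B$. The assertion is that this second layer is spanned by $d_0=b_0a_0$ and $d_{i+1}=a_ib_i+b_{i+1}a_{i+1}$; in dual terms this says that the relations cutting $B$ out of the full path coalgebra are the quadratic zigzag relations. Establishing these specific combinations, rather than arbitrary length-two paths, is the technical heart: it amounts to pinning down the Loewy structure of the projective-injective tilting comodules of the block and reading off which second-order extensions survive. I expect this to be the main obstacle, and I would attack it either by direct computation in the explicit matrix coalgebra $\Bbbk_{\zeta}[SL(2)]$, using $\Delta a=a\otimes a+b\otimes c$ and the analogous formulas for $b,c,d$, and descending to $B$ via the Morita--Takeuchi equivalence, or by transporting the already-understood structure of the block's comodule category.

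Finally, a dimension count in each coradical layer, combined with the fact that the tilting comodules force no coradical terms beyond those already exhibited, shows that the $g_i$, $a_i$, $b_i$, and $d_i$ span $B$ and that no further elements occur, completing the identification of $B$ with the stated subcoalgebra.
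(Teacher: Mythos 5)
This theorem is quoted from [Ch2] and the paper contains no proof of it at all, so there is no in-paper argument to compare your proposal against; I can only assess your outline on its own terms. Your overall strategy is the natural one and is consistent with everything the paper records about $B$: the linkage class $r,\,2\ell-2-r,\,2\ell+r,\,4\ell-2-r,\dots$ correctly enumerates the vertices; the uniserial length-two Weyl comodules do give one-dimensional $\mathrm{Ext}^{1}$ in each direction between consecutive linked simples (and only those), which yields the quiver $0\leftrightarrows 1\leftrightarrows 2\leftrightarrows\cdots$; and the termination of the coradical filtration at degree two matches the fact, stated later in the paper, that the indecomposable injectives $I_{n}=\Bbbk g_{n}+\Bbbk a_{n-1}+\Bbbk b_{n}+\Bbbk d_{n}$ are four-dimensional (three-dimensional for $I_{0}$) with Loewy length three.

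The genuine gap is that the step you yourself flag as the heart of the matter is never executed: you name two possible methods (direct computation with $\Delta a = a\otimes a + b\otimes c$ etc.\ followed by Morita--Takeuchi descent, or transport of known block structure) but carry out neither, so the identification of the degree-two layer remains an assertion. What actually has to be proved there is twofold. First, that $B_{2}/B_{1}$ is exactly one-dimensional over each vertex $i+1$ inside the two-dimensional span of the length-two paths $a_{i}b_{i}$ and $b_{i+1}a_{i+1}$ --- this is where the finite-dimensionality and Loewy structure of the projective-injectives enters. Second, and more delicately, that the surviving line is a \emph{nondegenerate} combination, i.e.\ both coefficients are nonzero; this is equivalent to the indecomposability of $I_{i+1}$ (a degenerate combination would split $\mathrm{rad}(I_{i+1})/\mathrm{soc}(I_{i+1})$ off incorrectly and decompose the injective), and only after establishing nondegeneracy can you rescale the arrows to normalize the coefficients to $1$, giving $d_{i+1}=a_{i}b_{i}+b_{i+1}a_{i+1}$ on the nose. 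Your sketch conflates these two points into ``reading off which second-order extensions survive''; spelling out the nondegeneracy argument, together with the special case $d_{0}=b_{0}a_{0}$ at the boundary vertex (where the minimal weight has a simple Weyl comodule and hence only one neighbor), is what would be needed to turn your program into a proof.
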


\begin{proposition}
$C$ is a \textit{symmetric} coalgebra.
\end{proposition}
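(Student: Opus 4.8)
The plan is to exploit the block decomposition of $C$ together with the explicit presentation of the blocks from the Theorem above, reducing the verification of symmetry to a single basic block. First I would recall that $C$ splits as a direct sum of its blocks, and that the bimodule structure defining symmetry, taken via the rational part of the dual algebra $R=\mathrm{D}C$ as in the proof of the Proposition on the translation operator, respects this decomposition; hence it suffices to show that each block is a symmetric coalgebra. The trivial blocks are simple coalgebras (comatrix coalgebras attached to the projective--injective simple comodules), and these are symmetric because a matrix algebra is a symmetric algebra. Since symmetry is invariant under Morita--Takeuchi equivalence, and the nontrivial blocks $C_{0},\dots,C_{\ell-2}$ are all Morita--Takeuchi equivalent to the basic coalgebra $B$ of the Theorem, it remains only to prove that $B$ is symmetric.

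To treat $B$ I would use the self-duality of its quiver-with-relations. Reversing every arrow of the line $0\rightleftarrows 1\rightleftarrows 2\rightleftarrows\cdots$ interchanges the two families of arrows, giving the involution $\rho$ with $\rho(g_{i})=g_{i}$, $\rho(a_{i})=b_{i}$, $\rho(b_{i})=a_{i}$; the key point is that the coradical-degree-two generators are \emph{palindromic}, so that reading the reversed walk of each length-two path returns the same path and $\rho(d_{0})=d_{0}$, $\rho(d_{i+1})=\rho(a_{i}b_{i})+\rho(b_{i+1}a_{i+1})=a_{i}b_{i}+b_{i+1}a_{i+1}=d_{i+1}$. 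Thus $\rho$ preserves the defining relations and exhibits $B$ as isomorphic to the coalgebra built from the opposite quiver, which is exactly the ingredient matching $B$ with $\mathrm{D}B$. I would then promote $\rho$ and the coradical grading to a nondegenerate associative symmetric pairing on $B$, equivalently a bicomodule isomorphism $B\cong\mathrm{D}B$; as in the proof of the Proposition above, the finite length of comodules lets me restrict to a finite-dimensional subcoalgebra $F\subset B$, where the dictionary ``$F$ symmetric $\iff$ $\mathrm{D}F$ symmetric algebra'' applies and the self-injective serial algebra $\mathrm{D}F$, being isomorphic to its opposite via $\rho$ and weakly symmetric, is symmetric. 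This bidirectional arrow structure is precisely what fails in the $\mathbb{A}_{\infty}^{\infty}$ example of Section~2.1, where the single-arrow quiver forces $\nu$ to shift by $n$.

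The main obstacle is the verification that the candidate pairing is a genuine morphism of $\mathrm{D}B$-bimodules, that is, associative for both the left and right actions and symmetric; this reduces to matching socles with tops across the two-way arrows and to careful bookkeeping of the degree-two elements $d_{i}$ and of the boundary vertex $0$, whose indecomposable projective--injective has length two rather than three. A secondary technical point, which I would address before the reduction, is to justify that symmetry is additive over the block decomposition and invariant under Morita--Takeuchi equivalence in the present generally infinite-dimensional, merely semiperfect setting, where one must work throughout with the rational part of the dual algebra rather than with $\mathrm{D}B$ naively.
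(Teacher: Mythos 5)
Your reduction of the symmetry of the basic block $B$ to a finite-dimensional subcoalgebra is the step that fails. Symmetry does not pass to subcoalgebras (dually: quotients of symmetric algebras are not symmetric), and the relevant finite-dimensional subcoalgebras of $B$ are visibly not symmetric: the truncation $F_{m}$ spanned by the $g_{i},a_{i},b_{i},d_{i}$ with indices at most $m$ is a subcoalgebra, but its dual $\mathrm{D}F_{m}$ is a string algebra (not serial, contrary to your description) that is not even self-injective at the boundary vertex $m$, where the socle of the projective fails to match its top; so the dictionary ``$F$ symmetric $\iff$ $\mathrm{D}F$ a symmetric algebra'' gives you nothing. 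The finite-dimensional reduction you borrow from the paper's proof of part (c) of the proposition on the translation is of a different nature: there one fixes a finite-dimensional comodule $N$ and replaces $C$ by the coefficient coalgebra of the finitely many $P_{i}$ with $\operatorname*{Hom}(P_{i},N)\neq0$ in order to compute $N^{\ast}$; it is not a statement that symmetry of $B$ can be tested on finite-dimensional subcoalgebras. Note also that your stated target, a bicomodule \emph{isomorphism} $B\cong\mathrm{D}B$, is impossible for infinite-dimensional $B$; the definition requires only a $\mathrm{D}B$-bimodule embedding $B\rightarrow\mathrm{D}B$. The correct completion of your idea is to skip the truncation entirely and write the form globally: $\phi(d_{i})=1$, $\phi(g_{i})=\phi(a_{i})=\phi(b_{i})=0$ is an associative, symmetric, nondegenerate, $\mathrm{D}B$-balanced form --- this is exactly the paper's Remark following the basic-coalgebra Lemma, and your observation that the $d_{i}$ are palindromic under the arrow-reversal $\rho$ is precisely why it is symmetric --- but you never actually define this pairing, substituting the flawed truncation argument in its place.

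Two further gaps, and a comparison. First, your block strategy needs the implication ``$B=eC_{r}e$ symmetric $\Rightarrow$ $C_{r}$ symmetric,'' i.e. Morita--Takeuchi invariance in the direction \emph{opposite} to the paper's Lemma, which proves only that the basic coalgebra of a symmetric coalgebra is symmetric; in the infinite-dimensional semiperfect setting this converse is not off the shelf in [CDN], you only flag it, and yet every nontrivial block depends on it (the trivial blocks additionally require the simples to be absolutely irreducible so that the blocks are comatrix coalgebras). Second, even granting all of that, your route is necessarily block-by-block, whereas the paper's actual proof is a short Hopf-algebraic argument applied to $C$ as a whole: the trivial comodule is the unique one-dimensional comodule, so $C$ is unimodular and Radford's distinguished group-like is the identity; $S^{2}$ is inner via $K^{2}\in U_{\zeta}\subset\mathrm{D}C$, so $S^{2}(c)=K^{-2}\rightharpoonup c\leftharpoonup K^{2}$; the criterion of [CDN] then yields symmetry of $C$ in one stroke, with no block decomposition, no basic coalgebra, and no Morita--Takeuchi transfer. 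Your approach can be repaired (via the explicit form of the Remark plus a proved Morita--Takeuchi invariance), but as written the central verification for $B$ rests on a false reduction.
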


\begin{proof}
By a result of Radford, there is a distinguished group-like element in $C$
(see [DNR, p. 197]), which is trivial if and only if $C$ is unimodular. There
is a unique one-dimensional comodule, namely the trivial comodule, so $C$ is
unimodular. This implies that the distinguished group-like element in $C$ is
just the identity element. The square of the antipode of $U_{\zeta}$ is inner
by the element $K^{2}\in U_{\zeta}\subset DC,$ and it follows by duality that
the square of the antipode of $C$ is given by $S^{2}(c)=K^{-2}\rightharpoonup
c\leftharpoonup K^{2}$ (a fact that can be checked directly). Thus $C$ is
unimodular and is inner by an element of $DC,$ and the conclusion now follows
from [CDN].
\end{proof}

\begin{remark}
The conclusion can be similarly shown to hold more generally for quantized
coordinate algebras, using the fact that $S^{2}$ is inner in the generic
quantized enveloping algebra $U_{\zeta}(\mathfrak{g})$ where $\mathfrak{g}$ is
complex semisimple Lie algebra, see e.g., [Ja] 4.4.
\end{remark}

The fact that each nontrivial block $B$ is a symmetric coalgebra follows from

\begin{lemma}
The basic coalgebra of a symmetric coalgebra $C$ is symmetric.
\end{lemma}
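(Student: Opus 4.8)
The plan is to dualize the statement and reduce it to the classical fact that a corner of a symmetric algebra is again symmetric. Write $R=\mathrm{D}C$ for the dual algebra, so that right $C$-comodules are the rational left $R$-modules and, as in the proof of the previous Proposition, $C$ is identified with $\operatorname{Rat}({}_{R}R)\subseteq R$. The symmetry hypothesis $C\cong\mathrm{D}C$ of $R,R$-bimodules says precisely that $R\cong R^{\vee}$ as $R,R$-bimodules, where $R^{\vee}=\operatorname{Rat}(R^{\ast})\cong C$ is the rational (continuous) dual; equivalently, $R$ is a symmetric pseudocompact algebra, encoded by a continuous symmetrizing functional $\lambda\in R^{\vee}\cong C$ with $\lambda(xy)=\lambda(yx)$ and $\ker\lambda$ containing no nonzero one-sided ideal. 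By the Morita–Takeuchi construction of the basic coalgebra from [Ch], $B$ is cut out by choosing one indecomposable injective right coideal per isomorphism type of simple; dually this corresponds to a basic idempotent $e=\sum_{i}e_{i}\in R$, a sum of pairwise orthogonal primitive idempotents, one per isomorphism class of simple left $R$-module. This sum is a genuine element of $R=C^{\ast}$: each $c\in C$ lies in a finite-dimensional subcoalgebra, so only finitely many $e_{i}$ are nonzero on $c$. The first task is to identify $\mathrm{D}B$ with the corner $eRe$, equivalently $B$ with $e\rightharpoonup C\leftharpoonup e$.

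The core step is that $eRe$ inherits a symmetric structure from $R$. Set $\lambda_{e}=\lambda|_{eRe}$. Symmetry is immediate, since $\lambda_{e}(xy)=\lambda(xy)=\lambda(yx)=\lambda_{e}(yx)$ for $x,y\in eRe$. For non-degeneracy I would use the standard cyclic computation: for $x=exe\in eRe$ and arbitrary $r\in R$ one has $\lambda(xr)=\lambda(exer)=\lambda(xere)=\lambda(x\,(ere))$ with $ere\in eRe$, where the middle equality moves the leading $e$ to the back using $\lambda(yz)=\lambda(zy)$. Hence if $\lambda_{e}(xa)=0$ for all $a\in eRe$, then $\lambda(xr)=0$ for all $r\in R$, so $x=0$ by non-degeneracy of $\lambda$ on $R$; the symmetric argument handles the other side. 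Crucially this computation uses no finiteness, so it goes through for the infinite-dimensional corner verbatim. Re-dualizing, $\lambda_{e}$ defines a symmetric structure on the predual $B\cong\mathrm{D}(eRe)$, giving $B\cong\mathrm{D}B$ as $\mathrm{D}B$-bimodules, which is the assertion.

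The main obstacle is the topological bookkeeping forced by the infinite-dimensionality of $C$. Two points need care: first, that $eRe$ really is the dual pseudocompact algebra of the basic coalgebra $B$ produced by [Ch], and not merely an algebra Morita-equivalent to $R$ — this is where the explicit construction of $B$, together with the fact that a symmetric coalgebra is semiperfect, is used to match the rational-module equivalence $M\mapsto eM$ with the Morita–Takeuchi equivalence $\mathcal{M}^{C}\simeq\mathcal{M}^{B}$. Second, one must check that the restricted functional $\lambda_{e}$ is still rational, i.e. lies in $(eRe)^{\vee}\cong e\rightharpoonup C\leftharpoonup e=B$, so that it genuinely encodes a coalgebra symmetric structure rather than only an abstract symmetric pseudocompact one; this follows because $\lambda\in C$ and the hit actions by $e$ preserve $C$. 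Should a fully element-free argument be preferred, the same conclusion can be reached by invoking the characterization of symmetric coalgebras from [CDN] and transporting the defining bimodule isomorphism along the equivalence, but the corner computation above is the most direct route and isolates exactly the non-degeneracy claim as the substantive point.
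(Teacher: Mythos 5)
Your strategy---pass to the corner $eRe$ dual to the basic coalgebra and transport the symmetric structure---matches the paper's in outline, but the datum you transport does not exist in the only case that matters, namely $\dim C=\infty$. You encode symmetry by a continuous trace functional $\lambda\in R^{\vee}\cong C$ with $\lambda(xy)=\lambda(yx)$ and $\ker\lambda$ containing no nonzero one-sided ideal, and your substantive step is ``$\lambda_{e}(xa)=0$ for all $a\in eRe$ implies $\lambda(xR)=0$ implies $x=0$.'' No such $\lambda$ can exist when $C$ is infinite-dimensional: a continuous functional on $R=\mathrm{D}C$ is evaluation at some $t\in C$, and by the fundamental theorem of coalgebras $t$ lies in a finite-dimensional subcoalgebra $F\subseteq C$; the annihilator $F^{\perp}$ is then a closed two-sided ideal of $R$, nonzero precisely when $C\neq F$, with $\lambda(F^{\perp})=F^{\perp}(t)=0$. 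So $\ker\lambda$ always contains a nonzero (even two-sided, closed) ideal, your nondegeneracy hypothesis is unsatisfiable, and the step $\lambda(xR)=0\Rightarrow x=0$ has no basis. Relatedly, ``$R\cong R^{\vee}$ as $R,R$-bimodules'' is impossible on dimension grounds: the definition in [CDN]---and the one the paper's proof actually invokes---is an \emph{embedding} $C\rightarrow\mathrm{D}C$ of $\mathrm{D}C$-bimodules, not an isomorphism. This is exactly where infinite-dimensional symmetric coalgebras part ways with finite-dimensional symmetric algebras, and it is why the trace-form technology you import does not transport. (Your computation is the correct classical one when $\dim C<\infty$, but there the lemma is just the known finite-dimensional fact.)

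The paper's proof shows why no nondegeneracy verification is needed at all: because the symmetric structure is an injective bimodule map $\phi\colon C\rightarrow\mathrm{D}C$ rather than a perfect pairing, it simply restricts. With $B=eCe$ (hit actions, by [CG]) and $\mathrm{D}B\cong e(\mathrm{D}C)e$, the bimodule property gives $\phi(e\rightharpoonup c\leftharpoonup e)=e\,\phi(c)\,e\in e(\mathrm{D}C)e$, and injectivity of $\phi|_{B}$ is automatic since restrictions of injective maps are injective---the point you flag as ``the substantive non-degeneracy claim'' evaporates. If you want to retain your cyclic trick, run it on the correct datum: the symmetric, $\mathrm{D}C$-balanced, nondegenerate bilinear form $\beta\colon C\times C\rightarrow\Bbbk$ from the characterization of symmetric coalgebras in [CDN] (concretely $\beta(c,d)=\phi(c)(d)$). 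For $b\in B$ the balanced and symmetry properties give $\beta(b,d)=\beta(b,e\rightharpoonup d\leftharpoonup e)$ for all $d\in C$, so $\beta(b,B)=0$ forces $\beta(b,C)=0$ and hence $b=0$; that is your argument correctly placed on a form on $C$ rather than a trace form on $R$. Your other bookkeeping points (the identification $\mathrm{D}B\cong eRe$, the well-definedness of $e=\sum_i e_i$, rationality under the hit actions) are sound and consistent with the paper's use of [CG].
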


\begin{proof}
By definition [CDN], there is a $\mathrm{D}C$-bimodule embedding
$C\rightarrow\mathrm{D}C.$ By [CG] the basic coalgebra $B$ of $C$ is of the
form $eCe$ where $e\in\mathrm{D}C$ (left and right "hit" actions). One can
easily check that $\mathrm{D}B\cong e(\mathrm{D}C)e,$ so that the coalgebra
$eCe$ embeds in $\mathrm{D}(eCe)$ as a $\mathrm{D}B,\mathrm{D}B$-bimodule by restriction.
\end{proof}

\begin{remark}
A symmetrizing form (see [CDN]) $\phi:B\rightarrow\Bbbk$ can be defined by
$\phi(d_{i})=1$ and $\phi(a_{i})=\phi(b_{i})=\phi(g_{i})=0$ for all $i.$ This
gives another proof that each nontrivial block $B$ is a symmetric coalgebra.
This form is an associative, symmetric, nondegenerate, D$B$-balanced form as required.
\end{remark}

The indecomposable injective right $B$-comodules are the coideals
\begin{align*}
I_{n}  &  =\Bbbk g_{n}+\Bbbk a_{n-1}+\Bbbk b_{n}+\Bbbk d_{n}\\
I_{0}  &  =\Bbbk g_{0}+\Bbbk b_{0}+\Bbbk d_{0}%
\end{align*}
$(n\geq1)$ and they are also projective comodules. Clearly rad$I_{n}=\Bbbk
g_{n}+\Bbbk a_{n-1}+\Bbbk b_{n}$ and rad$I_{0}=\Bbbk g_{0}+\Bbbk b_{0}$. Let
$S(n)$ the simple right comodule corresponding to the vertex $g_{n}$, for all
$n\in\mathbb{N}.$ The non-injective finite-dimensional comodules are described
in [Ch2] as \textit{string comodules }and can be computed using cosyzygies and syzygies:

\begin{proposition}
[{[Ch2]}]Every finite-dimensional non-injective indecomposable $B$-comodule is
in the $\Omega^{\pm}$-orbit of some simple comodule.
\end{proposition}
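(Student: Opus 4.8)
The plan is to exploit that $B$ is symmetric, so by the Proposition above $\tau=\Omega^{-2}$ and, more importantly, the syzygy functor $\Omega$ is an auto-equivalence of the stable category $\underline{\mathcal{M}}_f^B=\overline{\mathcal{M}}_f^B$ (here projectives and injectives coincide). Hence $\Omega$ permutes the isomorphism classes of finite-dimensional non-injective indecomposables, and the $\Omega^{\pm}$-orbits partition this set. It therefore suffices to show that every such orbit contains a simple comodule. I would combine this with the string-comodule description of the non-injective indecomposables recalled from [Ch2], so that each indecomposable is a walk supported on a finite interval of vertices of the quiver.

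First I would record the projective-injective structure explicitly: for $n\geq 1$ the comodule $I_n$ is biserial with socle $S(n)$, heart $S(n-1)\oplus S(n+1)$, and top $S(n)$, while the boundary term $I_0$ is \emph{uniserial} of length three, with composition series $S(0),S(1),S(0)$ from socle to top. This asymmetry at the vertex $0$ is the geometric feature that drives the whole argument.

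Next I would compute $\Omega^{-1}$ (and dually $\Omega$) on string comodules directly from these injective envelopes. The key local computation is that, away from the boundary, $\Omega^{-1}$ shifts the support of a string one step toward the vertex $0$ without increasing its length: one checks for instance that $\Omega^{-1}$ carries the length-two string with socle $S(n)$ and top $S(n+1)$ to the one with socle $S(n-1)$ and top $S(n)$, and similarly for longer zigzag strings. At the boundary the uniseriality of $I_0$ forces the dimension to drop; concretely $\Omega^{-1}$ of the length-two string $\Bbbk g_0+\Bbbk b_0\subseteq I_0$ is the simple $S(0)$.

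Finally I would assemble these into an exhaustiveness argument by introducing a monovariant, most naturally the minimal vertex occurring in the support of a string, refined lexicographically by its dimension, and showing that repeated application of $\Omega^{-1}$ strictly decreases this quantity until the orbit reaches a simple; the descent cannot stall, because once the support meets the vertex $0$ the special structure of $I_0$ forces a drop in length. The main obstacle is precisely this last step: verifying that the chosen monovariant genuinely decreases under $\Omega^{-1}$ for \emph{all} string comodules, not merely the short ones, which requires a careful case analysis of how $\Omega^{-1}$ acts on the two ends of a general zigzag string as its support approaches the boundary vertex. Once that local analysis is in hand, the partition into $\Omega^{\pm}$-orbits together with the descent to a simple yields the result.
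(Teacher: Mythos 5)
The paper itself offers no proof of this proposition: it is quoted from [Ch2], so your attempt can only be measured against what a correct argument must contain. Your framing is sound as far as it goes: since $B$ is symmetric, projective and injective comodules coincide, $\Omega$ and $\Omega^{-1}$ are inverse stable auto-equivalences, and the $\Omega^{\pm}$-orbits partition the non-injective indecomposables; your description of $I_n$ (biserial, heart $S(n-1)\oplus S(n+1)$) and of $I_0$ (uniserial, $S(0),S(1),S(0)$) matches the paper, and your two local computations are correct, e.g. $\Omega^{-1}$ does carry the $2$-string with socle $S(n)$, top $S(n+1)$ to the one with socle $S(n-1)$, top $S(n)$, and carries $\Bbbk g_0+\Bbbk b_0$ to $S(0)$.

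The gap is in the descent itself: the claim that $\Omega^{-1}$ shifts every string toward the vertex $0$ without increasing its length is false, and no monovariant that decreases under $\Omega^{-1}$ \emph{alone} can exist. First, the shift is orientation-dependent: $\Omega^{-1}$ of the $2$-string with socle $S(n)$ and top $S(n-1)$ (namely $\Bbbk g_n+\Bbbk a_{n-1}\subset I_n$) is $I_n/(\Bbbk g_n+\Bbbk a_{n-1})$, with socle $S(n+1)$ and top $S(n)$ — it moves \emph{away} from $0$. Second, and fatally, take $M=\Omega^{-1}S(n)=I_n/\mathrm{soc}\,I_n$, the string with top $S(n)$ and socle $S(n-1)\oplus S(n+1)$: then $\Omega^{-1}M=\Omega^{-2}S(n)$ has socle $S(n-2)\oplus S(n)\oplus S(n+2)$, so the support widens on \emph{both} sides and the dimension grows by $2$; iterating $\Omega^{-1}$ from $M$ yields $\Omega^{-k}S(n)$ with dimensions tending to infinity and never reaches a simple — for these comodules the simple lies in the $\Omega^{+}$ direction. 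Your lexicographic pair (minimal vertex, dimension) also fails concretely at the boundary: when the support of $\Omega^{-k}S(n)$ first touches $0$ (at $k=n$, dimension $2n+1$), the next cosyzygy is computed from $I_0\oplus I_2\oplus\cdots\oplus I_{2n}$ and has dimension $2n+2$ with minimal vertex still $0$, so the pair \emph{increases}, and the following step pushes the minimal vertex back to $1$. A repair must choose the direction per module — apply $\Omega$ when the socle-simples outnumber the top-simples, $\Omega^{-1}$ in the opposite case, and for the equal-count ``staircase'' strings iterate whichever of $\Omega^{\pm 1}$ marches the support toward $0$, where the uniseriality of $I_0$ finally drops the length — together with a verification that the string classification from [Ch2] contains no shapes other than these; alternatively, one can bypass the monovariant entirely by computing the full $\Omega^{\pm}$-orbits of the simples explicitly and matching them bijectively against the classified strings.
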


The almost split sequences and AR quiver for the category of
finite-dimensional $B$-comodules are described starting with the sequences
having an injective-projective comodule $I_{n}$ in the middle term. These are
precisely the sequences (see e.g. [ARS, p. 169])
\begin{equation}
0\rightarrow\mathrm{rad}(I_{n})\rightarrow\frac{\mathrm{rad}(I_{n}%
)}{\mathrm{soc}(I_{n})}\oplus I_{n}\rightarrow\frac{I_{n}}{\mathrm{soc}I_{n}%
}\rightarrow0 \tag*{(1)}\label{1}%
\end{equation}
with $n\in\mathbb{N}$. These sequences can be rewritten as
\begin{equation}
0\rightarrow\Omega(S(n))\rightarrow S(n-1)\oplus S(n+1)\oplus I_{n}%
\rightarrow\Omega^{-1}(S(n))\rightarrow0 \tag*{(2)}\label{2}%
\end{equation}
where $S(-1)$ is declared to be $0$.

\begin{theorem}
[{[Ch2]}]Applying $\Omega^{i}$, $i\in\mathbb{Z},$ to the sequences \ref{2}
yields all almost split sequences for $\mathcal{M}_{f}^{B}.$
\end{theorem}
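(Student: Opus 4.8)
The plan is to exploit that $B$, being symmetric, is self-injective, so that the classes of injective and projective finite-dimensional comodules coincide and the syzygy functor $\Omega$ is an auto-equivalence of the stable category $\underline{\mathcal{M}}_{f}^{B}=\overline{\mathcal{M}}_{f}^{B}$, with inverse $\Omega^{-1}$. First I would record the bookkeeping for the sequences \ref{2}: in the paper's convention $0\rightarrow M\rightarrow E\rightarrow\tau M\rightarrow0$, the sequence \ref{2} has starting term $M=\Omega(S(n))$ and ending term $\tau M=\Omega^{-2}M=\Omega^{-1}(S(n))$, consistent with $\tau\cong\Omega^{-2}$ from the Proposition. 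Thus \ref{2} is precisely the almost split sequence whose starting term is $\Omega(S(n))$.

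The central step is to show that $\Omega$ (equivalently $\Omega^{-1}$) carries almost split sequences to almost split sequences. I would argue this through the triangulated structure of the stable category: $\underline{\mathcal{M}}_{f}^{B}$ is triangulated with shift $\Sigma=\Omega^{-1}$, the almost split sequences correspond bijectively to Auslander--Reiten triangles, and any triangle auto-equivalence permutes these. Since $\Omega=\Sigma^{-1}$ commutes with $\tau=\Omega^{-2}$, the AR triangle with starting term $M$ is sent to the AR triangle with starting term $\Omega M$; translating back to short exact sequences, applying $\Omega$ to the almost split sequence starting at $M$ produces the almost split sequence starting at $\Omega M$. Iterating, $\Omega^{i}$ sends \ref{2} to the almost split sequence with starting term $\Omega^{i+1}(S(n))$.

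It then remains to check surjectivity. An almost split sequence is uniquely determined by its starting term, a non-injective indecomposable, which (since $B$ is symmetric) is the same as a non-projective indecomposable. By the earlier Proposition every such comodule is of the form $\Omega^{j}(S(m))$ for some $j\in\mathbb{Z}$, $m\in\mathbb{N}$. As $(i,n)$ ranges over $\mathbb{Z}\times\mathbb{N}$, the starting terms $\Omega^{i+1}(S(n))$ realize every $\Omega^{j}(S(m))$, so every almost split sequence occurs among the $\Omega^{i}$-translates of \ref{2}.

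The main obstacle is the central step: the claim that $\Omega$ preserves the almost split property together with the correct handling of the projective-injective summand $I_{n}$ in the middle of \ref{2}. Applying $\Omega$ kills this summand and may introduce a different projective-injective, so one must verify that the resulting honest short exact sequence, not merely its image in the stable category, is genuinely almost split. The triangulated argument handles the stable statement cleanly via uniqueness of AR triangles; the remaining care lies in lifting back to a non-split short exact sequence with the correct middle term. Alternatively, since all indecomposables here are the explicit string comodules $\Omega^{i}(S(n))$, one could instead compute $\Omega^{i}$ of \ref{2} directly and match it against the known list of sequences, trading the conceptual argument for an explicit but routine verification.
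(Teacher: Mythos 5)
The paper itself contains no proof of this theorem: it is imported verbatim from [Ch2], and all the paper adds is the gloss immediately after the statement (the sequences \ref{2} are exactly the almost split sequences with a projective-injective summand in the middle term, and their $\Omega^{i}$-translates for $i\neq0$ are exactly the ones without). So there is no internal argument to measure you against, and your proposal must stand as a proof in its own right. Its skeleton is sound and matches the paper's gloss. Since $B$ is symmetric, the finite-dimensional projective and injective comodules coincide, so $\mathcal{M}_{f}^{B}$ is a Frobenius category, $\underline{\mathcal{M}}_{f}^{B}$ is triangulated with shift $\Omega^{-1}$, and $\Omega$ is a triangle auto-equivalence; by the intrinsic characterization and uniqueness of AR triangles, $\Omega$ permutes them, sending the one starting at $M$ to the one starting at $\Omega M$. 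Your bookkeeping is also correct: by the Proposition in Section 1, $\tau\cong\Omega^{-2}$, so \ref{2} is the almost split sequence starting at $\Omega(S(n))=\mathrm{rad}(I_{n})$ and ending at $\Omega^{-1}(S(n))$. Exhaustion then follows as you say: an almost split sequence is determined up to isomorphism by its starting term, and by the Proposition quoted from [Ch2] every non-injective indecomposable is of the form $\Omega^{j}(S(m))$, all of which occur among the starting terms $\Omega^{i+1}(S(n))$.

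The point you flag but leave open --- lifting the stable statement back to honest short exact sequences and controlling projective-injective summands of the middle term --- is indeed the only substantive remainder, and it closes by standard means rather than the brute-force computation you offer as a fallback. By [CKQ] the almost split sequence starting at $\Omega^{i+1}(S(n))$ exists in $\mathcal{M}_{f}^{B}$; its stable image is your AR triangle, and since stably isomorphic finite-dimensional comodules without projective summands are isomorphic (Krull--Schmidt), its middle term is $\Omega^{i}S(n-1)\oplus\Omega^{i}S(n+1)\oplus Q$ with $Q$ projective-injective. Now a projective-injective $I_{m}$ occurs in the middle term of an almost split sequence precisely when the sequence starts at $\mathrm{rad}(I_{m})=\Omega(S(m))$; since $\Omega$ is a stable auto-equivalence, $Q\neq0$ would force $\Omega^{i}S(n)\cong S(m)$ with $i\neq0$, which is impossible because the string comodules $\Omega^{\pm i}S(n)$ of [Ch2] have length greater than $1$ for $i\neq0$ (for instance $\Omega^{-1}S(n)=I_{n}/\mathrm{soc}\,I_{n}$ has length $3$, resp.\ $2$ when $n=0$, and the lengths grow with $|i|$); equivalently, no simple comodule is $\Omega$-periodic, as the two stable components of type $\mathbb{ZA}_{\infty}$ displayed in the paper confirm. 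With that one observation supplied, your argument is complete, and it is more economical than the explicit matching of sequences you propose as an alternative.
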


The sequences \ref{2} are precisely the almost split sequences with an
injective-projective in the middle term; the other sequences
\[
0\rightarrow\Omega^{i+1}(S(n))\rightarrow\Omega^{i}S(n-1)\oplus\Omega
^{i}S(n+1)\rightarrow\Omega^{i-1}(S(n))\rightarrow0
\]
we obtain when we apply $\Omega^{i},$ for a nonzero integer $i,\ $are the
almost split sequences without an injective-projective summand in the middle term.

We define the Auslander-Reiten (AR) quiver of a coalgebra to be the quiver
whose vertices are isomorphism classes of indecomposable comodules and whose
(here multiplicity-free) arrows are defined by the existence of an irreducible
map between indecomposables.

We finally display the AR quiver for $\mathcal{M}_{f}^{B}$.$\ $The stable AR
quiver (with all injectives deleted) consists of two components of type
$\mathbb{ZA}_{\infty}$ which are transposed by $\Omega$.%

\begin{gather*}
\cdot\cdot\cdot%
\begin{array}
[c]{ccccccccccccccc}
& \Omega^{3}S(0) &  & \dashrightarrow &  & \Omega S(0) & \rightarrow & I_{0} &
\rightarrow & \Omega^{-1}S(0) &  & \dashrightarrow &  & \Omega^{-3}S(0) & \\
\nearrow &  & \searrow &  & \nearrow &  & \searrow &  & \nearrow &  & \searrow
&  & \nearrow &  & \searrow\\
& \dashrightarrow &  & \Omega^{2}S(1) &  & \dashrightarrow &  & S(1) &  &
\dashrightarrow &  & \Omega^{-2}S(1) &  & \dashrightarrow & \\
\searrow &  & \nearrow &  & \searrow &  & \nearrow &  & \searrow &  & \nearrow
&  & \searrow &  & \nearrow\\
& \Omega^{3}S(2) &  & \dashrightarrow &  & \Omega S(2) & \rightarrow & I_{2} &
\rightarrow & \Omega^{-1}S(2) &  & \dashrightarrow &  & \Omega^{-3}S(2) & \\
\nearrow &  & \searrow &  & \nearrow &  & \searrow &  & \nearrow &  & \searrow
&  & \nearrow &  & \searrow\\
& \dashrightarrow &  & \Omega^{2}S(3) &  & \dashrightarrow &  & S(3) &  &
\dashrightarrow &  & \Omega^{-2}S(3) &  & \dashrightarrow & \\
\searrow &  & \nearrow &  & \searrow &  & \nearrow &  & \searrow &  & \nearrow
&  & \searrow &  & \nearrow\\
& \Omega^{3}S(4) &  & \dashrightarrow &  & \Omega S(4) & \rightarrow & I_{4} &
\rightarrow & \Omega^{-1}S(4) &  & \dashrightarrow &  & \Omega^{-3}S(4) &
\end{array}
\cdot\cdot\cdot\\
\cdot\\
\cdot\\
\cdot
\end{gather*}

\begin{gather*}
\cdot\cdot\cdot%
\begin{array}
[c]{ccccccccccccccc}
& \Omega^{2}S(0) &  & \dashrightarrow &  & S(0) &  & \dashrightarrow &  &
\Omega^{-2}S(0) &  & \dashrightarrow &  & \Omega^{-4}S(0) & \\
\nearrow &  & \searrow &  & \nearrow &  & \searrow &  & \nearrow &  & \searrow
&  & \nearrow &  & \searrow\\
& \dashrightarrow &  & \Omega S(1) & \rightarrow & I_{1} & \rightarrow &
\Omega^{-1}S(1) &  & \dashrightarrow &  & \Omega^{-3}S(1) &  & \dashrightarrow
& \\
\searrow &  & \nearrow &  & \searrow &  & \nearrow &  & \searrow &  & \nearrow
&  & \searrow &  & \nearrow\\
& \Omega^{2}S(2) &  &  &  & S(2) &  & \dashrightarrow &  & \Omega^{-2}S(2) &
& \dashrightarrow &  & \Omega^{-4}S(2) & \cdots\\
\nearrow &  & \searrow &  & \nearrow &  & \searrow &  & \nearrow &  & \searrow
&  & \nearrow &  & \searrow\\
& \dashrightarrow &  & \Omega S(3) & \rightarrow & I_{3} & \rightarrow &
\Omega^{-1}S(3) &  & \dashrightarrow &  & \Omega^{-3}S(3) &  & \dashrightarrow
& \\
\searrow &  & \nearrow &  & \searrow &  & \nearrow &  & \searrow &  & \nearrow
&  & \searrow &  & \nearrow\\
& \Omega^{2}S(4) &  & \dashrightarrow &  & S(4) &  & \dashrightarrow &  &
\Omega^{-2}S(4) &  & \dashrightarrow &  & \Omega^{-4}S(4) &
\end{array}
\cdot\cdot\cdot\\
\cdot\\
\cdot\\
\cdot
\end{gather*}

\end{document}